\theoremstyle{plain} 
\newtheorem{thm}{ Theorem}[section]
\newtheorem{lem}[thm]{Lemma}
\newtheorem{prop}[thm]{Proposition}
\newtheorem{cor}[thm]{Corollary}
\newtheorem{defn}[thm]{Definition}
\newtheorem{ex}[thm]{Example}
\newtheorem* {note*}{Note}
\newcommand{\co} {\mathbb{C}}
\newcommand{\R} {\mathbb{R}}
\newcommand{\iy} {\infty}
\newcommand{\N} {\mathbb{N}}
\newcommand{\D} {\mathbb{D}}
\newcommand{\al}{\alpha}
\newcommand{\lm}{\lambda}
\newcommand{\si}{\sigma}
\newcommand{\om}{\omega}
\newcommand{\eps}{\epsilon}
\newcommand{\h}{\mathcal H}
\newcommand{\A}{\mathcal A}
\newcommand{\B}{\mathcal B}
\newcommand{\BH}{\mathcal B(\mathcal H)}
\newcommand{\norm}[1]{\left\Vert#1\right\Vert}
\newcommand{\abs}[1]{\left\vert#1\right\vert}
\newcommand{\set}[1]{\left\{#1\right\}}
\newcommand{\brc}[1]{\left(#1\right)}
\newcommand{\iner}[1]{\left<#1\right>}
\newcommand{\LN}{\ell^{2}(\mathbb N)}
\newcommand{\LNp}{\ell^{p}(\mathbb N)}
\newcommand{\Dinf}{\D C_\iy(\h)}
\title{\bf \Large Operators with Diskcyclic Vectors Subspaces}
\author[1]{\bf\footnotesize Nareen Bamerni \thanks{nareen\_bamerni@yahoo.com}}
\author[2]{\bf Adem K{\i}l{\i}\c{c}man \thanks{akilicman@yahoo.com}}
\affil[1,2]{\bf Department of Mathematics, University Putra Malaysia,
43400 UPM, Serdang, Selangor, Malaysia}
\begin{document}
\date{}
\maketitle
\begin{abstract}
In this paper, we prove that if $T$ is diskcyclic operator then the closed unit disk multiplied by the union of the numerical range of all iterations of $T$ is dense in $\h$. Also, if $T$ is diskcyclic operator and $|\lm|\le 1$, then $T-\lm I$ has dense range. Moreover, we prove that if $\al >1$, then $\frac{1}{\al}T$ is hypercyclic in a separable Hilbert space $\h$ if and only if $T \oplus \al I_{\co}$ is diskcyclic in $\h \oplus \co$. We show at least in some cases a diskcyclic operator has an invariant, dense linear subspace  or an infinite dimensional closed linear subspace, whose non-zero elements are diskcyclic vectors. However, we give some counterexamples to show that not always a diskcyclic operator has such a subspace.  \\

{\scriptsize
\noindent{\bf Keywords:} Diskcyclic operator, diskcyclic vector, Diskcyclicity Criterion, condition $\mathcal B_1$, numerical range.\\
\noindent{\bf AMS Subject Classification:} Primary 47A16; Secondary 47A12, 47A15.}
\end{abstract}
\section{Introduction}
An operator $T$ is called {\bf hypercyclic} if there is a vector $x\in \h$ such that $Orb(T,x)=\{T^n x: n\in \N\}$ is dense in $\h$, such a vector $x$ is called {\bf hypercyclic} for $T$. In 1969, Rolewicz  \cite{Rolewicz} constructed the first example of hypercyclic operator in a Banach space. He proved that if $B$ is a backward shift on the Banach space $\LNp$ then $\lm B$ is hypercyclic for any complex number $\lm;\,|\lm|>1$. This led Hilden and  Wallen \cite{Hilden} to consider the scaled orbit of an operator. An operator $T$ is {\bf supercyclic} if there is a vector $x\in \h$ such that $\co Orb(T,x)=\{\lm T^n x: \lm \in \co,\, n\in \N\}$ is dense in $\h$, where $x$ is called {\bf supercyclic vector}. For more information on  hypercyclicity and supercyclicity concepts, one may refer to \cite{dynamic,Erdman,Kitai}.\\

By Rolewicz example, a backward shift $\lm B$ is not hypercyclic whenever $|\lm| \le 1$. In the last case, we can notice that even the multiplication of the closed unit disk $\D=\set{x\in \co:|x|\le 1}$ by the orbit of $B$ will not be dense. Therefore, one may ask  `` Can the multiplication of the closed unit disk by the orbit of an operator be dense?"  In 2003, Zeana \cite{cyclic} considered the disk orbit of an operator. An operator $T$ is called {\bf diskcyclic} if there is a vector $x\in \h$ such that the disk orbit $\D Orb(T,x)=\{\al T^nx:n\geq 0, \al\in \co ,|\al|\leq 1\}$ is dense in $\h$, such a vector $x$ is called {\bf diskcyclic for $T$}. She proved that the diskcyclicity is a mid way between the hypercyclicity and the supercyclicity. 

\[ \begin{array}{ccccccc}
\rm{ Hypercyclicity} &\Rightarrow& \rm{ Diskcyclicity} &\Rightarrow& \rm{Supercyclicity}.
  \end{array} \]
In this paper, all Hilbert spaces are infinite dimensional (unless stated otherwise) separable over the field $\co$ of complex numbers. The set of all diskcyclic operators in a Hilbert space $\h$ is denoted by  $\D C(\h)$ and the set of all diskcyclic vectors for an operator $T$  is denoted by $\D C(T)$.\\	

We recall the following facts from \cite{m3}.
 
 \begin{thm}[ Diskcyclic Criterion]\label{dc}
Let $T\in\BH$. Assume that there exist an increasing sequence of integers $\set{n_k}$, two dense sets $X,Y \subset \h$ and a sequence of maps $S_{n_k} : Y \to \h$ such that: 
\begin{enumerate}
\item $\lim_{k\to\infty}\norm{T^{n_k}x}\norm{S^{n_k}y}=0$ for all $x\in X$, $y\in Y$.
\item $\lim_{k\to\infty}\norm{S_{n_k}y} \to 0$ for all $y \in Y$;
\item $T^{n_k} S_{n_k}y \to y$ for all $y \in Y$.
\end{enumerate}
Then $T$ has a diskcyclic vector.
\end{thm}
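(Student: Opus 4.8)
The plan is to prove something stronger, namely that the set $\D C(T)$ of diskcyclic vectors is a dense $G_\delta$ subset of $\h$, and then invoke the Baire Category Theorem to conclude it is nonempty. Since $\h$ is separable, I would fix a countable base $\set{V_j}_{j\in\N}$ of nonempty open sets for its topology. A vector $x$ is diskcyclic precisely when its disk orbit meets every $V_j$, so that
\[
\D C(T)=\bigcap_{j\in\N}G_j,\qquad G_j:=\set{x\in\h:\ \exists\,n\ge 0,\ \exists\,\al\in\co,\ \abs{\al}\le 1,\ \al T^n x\in V_j}.
\]
Each $G_j$ is open, since it is a union over $n\ge 0$ and $\abs{\al}\le 1$ of the sets $\set{x\in\h:\al T^n x\in V_j}$, each of which is the preimage of the open set $V_j$ under the continuous map $x\mapsto\al T^n x$. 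As $\h$ is a complete metric space, Baire's theorem then reduces everything to proving that each $G_j$ is dense.

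To prove density of a fixed $G_j$, I would take an arbitrary nonempty open set $U$ and produce a point of $U$ whose disk orbit meets $V_j$. Using the density of $X$ and $Y$, pick $u\in X\cap U$ and $v\in Y\cap V_j$ with $v\neq 0$ (possible since a nonempty open set minus a point is still open and nonempty, and $Y$ is dense). Then consider the perturbed points
\[
x_k=u+c_k\,S_{n_k}v,\qquad \al_k=\tfrac{1}{c_k},
\]
where $c_k\ge 1$ are real scalars chosen below, so that automatically $\abs{\al_k}\le 1$ and $\al_k c_k=1$.

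The heart of the argument is the choice of $c_k$. Writing $p_k=\norm{T^{n_k}u}$ and $q_k=\norm{S_{n_k}v}$ (with $q_k>0$ for large $k$, since $T^{n_k}S_{n_k}v\to v\neq 0$ by (3)), I would set
\[
c_k=\max\set{1,\sqrt{p_k/q_k}}.
\]
Then $c_k q_k=\max\set{q_k,\sqrt{p_kq_k}}\to 0$ by (1) and (2), whence $x_k\to u$ and so $x_k\in U$ for large $k$; meanwhile
\[
\al_k T^{n_k}x_k=\frac{T^{n_k}u}{c_k}+T^{n_k}S_{n_k}v,
\]
where $\norm{T^{n_k}u}/c_k=\min\set{p_k,\sqrt{p_kq_k}}\to 0$ again by (1), while $T^{n_k}S_{n_k}v\to v$ by (3). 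Hence $\al_k T^{n_k}x_k\to v\in V_j$, so $\al_k T^{n_k}x_k\in V_j$ for large $k$. Fixing such a $k$ exhibits $x_k\in U\cap G_j$, proving $G_j$ dense and finishing the argument.

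The main obstacle, and the only place where diskcyclicity differs essentially from the hypercyclic and supercyclic criteria, is the constraint $\abs{\al}\le 1$: the disk scalars may contract but never expand. Consequently I cannot simply force $T^{n_k}x_k\to v$ as in the hypercyclicity criterion, nor rescale freely as in supercyclicity; instead I must contract the stray term $T^{n_k}u$ down to $0$ while preserving the target term $T^{n_k}S_{n_k}v$. The reciprocal pairing $\al_k c_k=1$ together with the geometric-mean choice $c_k=\max\set{1,\sqrt{p_k/q_k}}$ is precisely the device that makes the two requirements $c_kq_k\to 0$ and $p_k/c_k\to 0$ hold at once, which is exactly why \emph{both} smallness hypotheses (1) and (2) are indispensable here.
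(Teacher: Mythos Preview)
Your argument is correct. The paper itself does not supply a proof of this theorem: it is stated in the introduction among the ``facts recalled from \cite{m3}'', so there is no proof in the paper to compare against.

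Your approach is the standard Baire-category route used for the hypercyclic and supercyclic criteria: write $\D C(T)=\bigcap_j G_j$ and show each $G_j$ is open and dense. The only step specific to diskcyclicity is handling the constraint $\abs{\al}\le 1$, and your device of taking $c_k=\max\set{1,\sqrt{p_k/q_k}}$ with $\al_k=1/c_k$ does this cleanly: $c_kq_k\le\max\set{q_k,\sqrt{p_kq_k}}\to 0$ by (1) and (2), giving $x_k\to u$, while $p_k/c_k\le\sqrt{p_kq_k}\to 0$ by (1), giving $\al_kT^{n_k}x_k\to v$. The side conditions you flag (choosing $v\neq 0$, and $q_k>0$ for large $k$ because $T^{n_k}S_{n_k}v\to v\neq 0$) are the right ones and are handled correctly.
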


\begin{prop}\label{2} Let $ T,\, S\in \BH$ such that $ST=TS$ and $R(S)$ is dense in $\h$. If $x\in \D C(T)$, then $Sx\in \D C(T)$.
\end{prop}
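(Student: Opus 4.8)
The plan is to reduce the statement to a purely formal identity about disk orbits together with one general fact about bounded operators whose range is dense, after which the conclusion is immediate.

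First I would record the elementary consequence of the commutation relation: since $ST=TS$, an easy induction gives $ST^n=T^nS$ for every $n\ge 0$. Using that $S$ is linear, for every $n\ge 0$ and every scalar $\al$ with $|\al|\le 1$ one has $\al T^n(Sx)=\al S(T^nx)=S(\al T^nx)$. Taking the union over all admissible $n$ and $\al$ yields the key identity
$$\D Orb(T,Sx)=S\brc{\D Orb(T,x)}.$$
Thus the disk orbit of the new vector $Sx$ is literally the continuous image under $S$ of the disk orbit of $x$.

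Second, since $x\in\D C(T)$ the set $\D Orb(T,x)$ is dense in $\h$, so it suffices to establish the following general principle: \emph{if $A\subseteq\h$ is dense and $S\in\BH$ has dense range, then $S(A)$ is dense in $\h$.} To prove this I would use the continuity of $S$ to get $S(\overline A)\subseteq\overline{S(A)}$; since $\overline A=\h$ this gives $R(S)=S(\h)\subseteq\overline{S(A)}$. Because $\overline{S(A)}$ is closed and already contains the dense set $R(S)$, it must coincide with $\h$. Applying this with $A=\D Orb(T,x)$ and combining with the orbit identity above yields $\overline{\D Orb(T,Sx)}=\h$, that is, $Sx\in\D C(T)$.

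The argument is short, and I do not expect a serious obstacle: the orbit identity is purely formal and the density-preservation step is the only substantive ingredient, resting entirely on the combination of continuity and dense range of $S$. The single point that deserves a little care is the bookkeeping in the orbit identity, namely checking that the scalar $\al$ may legitimately be pulled through $S$ and that the index set over $n$ and $\al$ matches on both sides, so that no elements are lost or gained when passing to the image under $S$.
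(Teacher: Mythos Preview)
Your argument is correct and is the standard one for this type of result: establish the orbit identity $\D Orb(T,Sx)=S\brc{\D Orb(T,x)}$ from commutation and linearity, then invoke the general fact that a bounded operator with dense range sends dense sets to dense sets. There is nothing to compare against, however, because the paper does not supply its own proof of this proposition; it is listed among facts recalled from \cite{m3} without argument.
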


\begin{prop}\label{huge}
If $x$ is a diskcyclic vector of $T$, then $T^nx$ is also a diskcyclic vector of $T$ for all $ n\in \N$.
\end{prop}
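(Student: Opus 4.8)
The plan is to derive this from Proposition \ref{2}. Taking $S=T$ there, the commutation hypothesis $ST=TS$ is automatic, so the statement will follow at once \emph{provided} $T$ has dense range: indeed, Proposition \ref{2} applied to the diskcyclic vector $x$ then gives $Tx\in\D C(T)$, applying it again to $Tx$ gives $T^2x=T(Tx)\in\D C(T)$, and a straightforward induction on $n$ yields $T^n x\in\D C(T)$ for every $n\in\N$ (the case $n=0$ being trivial). So the whole proof reduces to the single observation that a diskcyclic operator has dense range.

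To see that $R(T)$ is dense, I would use that $\D Orb(T,x)=\set{\al T^m x : m\geq 0,\ |\al|\leq 1}$ is dense in $\h$, and peel off the $m=0$ term. Write $D_0=\set{\al x : |\al|\leq 1}$ and $B=\set{\al T^m x : m\geq 1,\ |\al|\leq 1}$, so that $\D Orb(T,x)=D_0\cup B$. The set $D_0$ is a closed disk contained in the one-dimensional subspace $\co x$, hence nowhere dense in the infinite-dimensional space $\h$; consequently $\h\setminus D_0$ is open and dense. Since the intersection of a dense set with an open dense set is dense, $\D Orb(T,x)\cap(\h\setminus D_0)=B\setminus D_0$ is dense, and therefore so is $B$. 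Finally $B\subseteq R(T)$, because each $\al T^m x=T\brc{\al T^{m-1}x}$ with $m\geq 1$ lies in the range of $T$; hence $R(T)$ is dense.

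The argument is short, and the only genuine point is the topological remark in the second paragraph: that the disk $D_0$ is nowhere dense, which is exactly where the infinite-dimensionality of $\h$ is used, after which ``dense minus a closed nowhere-dense set stays dense'' finishes the claim. I would note in passing that the same computation proves the result directly without Proposition \ref{2}, since $B$ is precisely $\D Orb(T,Tx)$, so the density of $B$ says exactly that $Tx$ is diskcyclic; one may then simply iterate. The cleanest writeup, however, is to isolate the dense-range fact as above and invoke Proposition \ref{2}.
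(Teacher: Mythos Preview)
The paper does not supply its own proof of Proposition \ref{huge}: it is one of the results quoted without proof from \cite{m3} in the introduction. Your argument is correct. The essential point is precisely the one you isolate---the disk $D_0=\set{\al x:|\al|\le 1}$ is compact and lies in the one-dimensional subspace $\co x$ of the infinite-dimensional space $\h$, hence is nowhere dense, so removing it from the dense set $\D Orb(T,x)$ leaves the dense set $B=\D Orb(T,Tx)\subseteq R(T)$. From there either route you describe---invoking Proposition \ref{2} with $S=T$, or simply iterating the direct observation that $Tx\in\D C(T)$---completes the proof. This is the standard argument and almost certainly what \cite{m3} does as well.
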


\begin{cor}\label{3}
If $T$ is a diskcyclic operator on a Hilbert space $\h$, then the set of all diskcyclic vectors for $T$ is dense in $\h$.
\end{cor}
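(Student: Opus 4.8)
The plan is to build the dense set of diskcyclic vectors directly out of a single disk orbit, using the two propositions just recalled. First I would fix any diskcyclic vector $x$ for $T$, which exists by hypothesis, so that $\D Orb(T,x)$ is dense in $\h$. The goal is then to show that every point of this dense orbit, with one unavoidable exception, is itself a diskcyclic vector.

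The key observation is that nonzero scalar multiples preserve diskcyclicity. Given $\al\in\co$ with $\al\neq 0$, set $S=\al I$. Then $S$ commutes with $T$ and $R(S)=\h$ is dense, so Proposition \ref{2} yields $\al y\in\D C(T)$ whenever $y\in\D C(T)$. Combining this with Proposition \ref{huge}, which gives $T^n x\in\D C(T)$ for every $n$, I conclude that $\al T^n x\in\D C(T)$ for all $n\geq 0$ and all $\al\neq 0$. In other words, every element of $\D Orb(T,x)$ except possibly the zero vector is a diskcyclic vector for $T$; that is, $\D Orb(T,x)\setminus\set{0}\subseteq\D C(T)$.

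It then remains to check that this subset is still dense. Since $\D Orb(T,x)$ is dense in $\h$ and $\h$ has no isolated points, discarding the single point $0$ cannot destroy density: given a nonempty open set $U$, the set $U\setminus\set{0}$ is again nonempty and open, so it meets $\D Orb(T,x)$, producing a nonzero orbit element in $U$. Hence $\D Orb(T,x)\setminus\set{0}$, and a fortiori the larger set $\D C(T)$, is dense in $\h$.

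The only genuinely delicate point is the exclusion of the zero vector: the origin lies in every disk orbit (take $\al=0$) yet is never diskcyclic, since $\D Orb(T,0)=\set{0}$ is not dense in the nontrivial space $\h$. Thus the naive inclusion $\D Orb(T,x)\subseteq\D C(T)$ fails, and one must argue separately, as above, that removing the origin leaves a dense set. Everything else is a direct assembly of Propositions \ref{2} and \ref{huge}, so I do not expect any real obstacle beyond this bit of bookkeeping.
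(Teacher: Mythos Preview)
Your proof is correct. Note, however, that the paper does not actually supply its own proof of this statement: Corollary~\ref{3} is one of several results recalled without proof from \cite{m3}, placed immediately after Propositions~\ref{2} and~\ref{huge} precisely because it is meant to follow from them. Your argument---use Proposition~\ref{huge} to get $T^n x\in\D C(T)$, use Proposition~\ref{2} with $S=\al I$ (or the direct observation that $\D Orb(T,\al y)=\al\,\D Orb(T,y)$) to pass to nonzero scalar multiples, and then discard the origin---is exactly the natural deduction from those two propositions and is almost certainly the proof intended in \cite{m3}. The bookkeeping about removing $0$ is handled cleanly.
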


\begin{prop} \label{2.16}
Let $T\in\D C(\h)$. Then $T^*$ has at most one eigenvalue and that one has
modules greater than $1$.
\end{prop}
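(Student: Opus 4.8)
The plan is to exploit the defining density of the disk orbit by composing it with the linear functionals determined by eigenvectors of $T^*$. Fix a diskcyclic vector $x$, so that $\D\,Orb(T,x)=\{\al T^n x: n\ge 0,\ |\al|\le 1\}$ is dense in $\h$. If $y\neq 0$ satisfies $T^*y=\lm y$, the computation I would start from is the identity
\[
\langle \al T^n x, y\rangle = \al\,\langle x, (T^*)^n y\rangle = \al\,\bar\lm^{\,n}\langle x,y\rangle ,
\]
valid for every $n\ge 0$ and every $\al$ with $|\al|\le 1$, since $(T^n)^*=(T^*)^n$.

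First I would settle the modulus statement. Because $y\neq 0$, the functional $z\mapsto \langle z,y\rangle$ is continuous and surjective onto $\co$, hence open, so it carries the dense set $\D\,Orb(T,x)$ onto a dense subset of $\co$. By the identity this image is $\{\al\bar\lm^{\,n}\langle x,y\rangle: |\al|\le1,\ n\ge0\}$. If $\langle x,y\rangle=0$ the image is $\{0\}$, while if $|\lm|\le1$ then every element has modulus at most $|\langle x,y\rangle|$; in either case the image is bounded (or trivial) and cannot be dense in $\co$. This simultaneously forces $\langle x,y\rangle\neq0$ and $|\lm|>1$, giving the second assertion and a fact I reuse below.

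For uniqueness I would argue by contradiction: suppose $T^*$ has two distinct eigenvalues $\lm_1\neq\lm_2$ with eigenvectors $y_1,y_2$. Eigenvectors for distinct eigenvalues are linearly independent, so $\Phi:\h\to\co^2$, $\Phi(z)=(\langle z,y_1\rangle,\langle z,y_2\rangle)$, is surjective and hence open; thus $\Phi(\D\,Orb(T,x))$ is dense in $\co^2$. Writing $p=\langle x,y_1\rangle$ and $q=\langle x,y_2\rangle$ (both nonzero by the previous step), this image equals $\{(\al\bar\lm_1^{\,n}p,\ \al\bar\lm_2^{\,n}q): |\al|\le1,\ n\ge0\}$. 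Density in $\co^2$ forces the coordinate ratios $u/v$ of its points (those with $v\neq0$) to be dense in $\co$, since approximating the target $(\zeta,1)$ yields a point whose ratio is near $\zeta$. But on the image the ratio equals
\[
\frac{\al\bar\lm_1^{\,n}p}{\al\bar\lm_2^{\,n}q}=\frac{p}{q}\,\bar w^{\,n},\qquad w=\frac{\lm_1}{\lm_2},
\]
independently of $\al$, so the attainable ratios form the set $\{(p/q)\bar w^{\,n}: n\ge0\}$.

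The crux, and the step I expect to be the main obstacle, is to rule out density of this last set; this is exactly where $\lm_1\neq\lm_2$ must be used, and the naive comparison of moduli alone fails when $|\lm_1|=|\lm_2|$. I would split into cases. If $|w|\neq1$, the moduli $(|p|/|q|)\,|w|^{n}$ form a geometric progression, hence omit an interval of $(0,\infty)$, so the ratio set omits an open annulus of $\co$. If $|w|=1$, every attainable ratio has the fixed modulus $|p|/|q|$, so the ratio set lies on a single circle. In either case the set of ratios misses a nonempty open subset of $\co$ and therefore cannot be dense, contradicting the density forced by $\Phi$. Hence $T^*$ admits at most one eigenvalue, completing the proof together with the modulus bound established above.
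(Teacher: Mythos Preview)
Your argument is correct. Both parts---the modulus bound and the uniqueness---are handled cleanly, and the case split on $|w|$ is the right way to dispose of the ratio set.

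For comparison with the paper: Proposition~\ref{2.16} is actually quoted from \cite{m3} without proof in the introduction. What the paper itself contributes is an \emph{alternative} argument, appearing as a remark after the proposition that $T-\lm I$ has dense range for $|\lm|\le 1$: since $\si_p(T^*)$ equals the compression spectrum $\Gamma(T)$, any eigenvalue $\lm$ of $T^*$ would make $T-\lm I$ have non-dense range, forcing $|\lm|>1$. Note that this alternative argument addresses only the modulus assertion; the paper does not supply its own proof of the ``at most one eigenvalue'' clause. Your approach is therefore more complete within the paper's scope. It is also methodologically different for the modulus part: rather than routing through the compression spectrum and a separate dense-range proposition, you work directly with the eigenfunctional $\langle\,\cdot\,,y\rangle$ and the boundedness of $\{\al\bar\lm^{\,n}:|\al|\le1,\ n\ge0\}$. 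The paper's route has the advantage of tying the result to the structural fact $\si_p(T^*)=\Gamma(T)$; yours is self-contained and immediately generalizes to the two-functional map $\Phi$ needed for uniqueness.
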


\begin{cor}\label{Back not D}
A multiple of a unilateral backward shift on $\LN$ is hypercylcic if and only if it is diskcyclic.
\end{cor}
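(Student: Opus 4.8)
The plan is to prove the two implications separately, exploiting that one of them is immediate. Since hypercyclicity always implies diskcyclicity (the hierarchy recorded in the introduction), the forward direction is free: a hypercyclic multiple of the backward shift is automatically diskcyclic. Hence the entire content of the corollary is the reverse implication, namely that a diskcyclic multiple $\lm B$ must already be hypercyclic. To organize this, I would first recall the classical characterization cited in the introduction: by Rolewicz's theorem, $\lm B$ is hypercyclic precisely when $\abs{\lm}>1$. In view of this, the reverse implication reduces to a single claim, that if $\abs{\lm}\le 1$ then $\lm B$ is \emph{not} diskcyclic. Establishing this claim closes the loop, since diskcyclicity then forces $\abs{\lm}>1$, which in turn forces hypercyclicity.

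The key step is a norm estimate exploiting that $B$ is a contraction, $\norm{B}=1$, so that $\norm{B^n x}\le\norm{x}$ for every $n$ and every $x\in\LN$. For $\abs{\lm}\le 1$ and any $x$, every element of the disk orbit satisfies $\norm{\al(\lm B)^n x}=\abs{\al}\,\abs{\lm}^n\norm{B^n x}\le\norm{x}$, because $\abs{\al}\le 1$ and $\abs{\lm}^n\le 1$. Thus $\D Orb(\lm B,x)$ is contained in the closed ball of radius $\norm{x}$ and cannot be dense in the unbounded infinite-dimensional space $\LN$. Consequently $\lm B$ admits no diskcyclic vector whenever $\abs{\lm}\le 1$, which is exactly the claim needed above.

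I do not expect a serious obstacle: the argument is essentially the boundedness observation already foreshadowed in the introduction, that the disk orbit of a contraction-scaled shift stays in a fixed ball. The only points requiring care are to invoke the correct direction of Rolewicz's characterization (both that $\abs{\lm}>1$ yields hypercyclicity and, via the same norm estimate, that $\abs{\lm}\le 1$ rules it out) and to dispose of the degenerate case $\lm=0$, where $\lm B=0$ is trivially neither hypercyclic nor diskcyclic, so the equivalence holds across all multiples.
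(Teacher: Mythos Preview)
Your argument is correct: the forward implication is the general hierarchy, and for the converse you use $\norm{B}\le 1$ to show that when $\abs{\lm}\le 1$ every element $\al(\lm B)^n x$ of the disk orbit has norm at most $\norm{x}$, so the disk orbit is bounded and cannot be dense; hence diskcyclicity forces $\abs{\lm}>1$, and Rolewicz's theorem then gives hypercyclicity. This is sound and self-contained.

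Note, however, that the paper does not actually supply a proof of this corollary: it is listed in the introduction among the facts ``recalled'' from \cite{m3}, with no argument given. So there is no paper proof to compare against here. Your approach is the natural one (and is indeed the boundedness observation the introduction alludes to when it remarks that for $\abs{\lm}\le 1$ ``even the multiplication of the closed unit disk by the orbit of $B$ will not be dense''). One small stylistic point: your handling of the degenerate case $\lm=0$ is already covered by the general norm estimate, so there is no need to single it out.
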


This paper consists of three sections. In section two, we show that if an operator $T$ is diskcyclic, then the closed unit disk multiplied by the union of the numerical range of all iterations of $T$ is dense in $\co$. We show that $T-\lm I$ has dense range for all $\lm \in \co;\, |\lm|\le 1$  whenever $T$ is diskcyclic. We give a relation between a hypercyclic operator on a Hilbert space $\h$ and a diskcyclic operator on the Hilbert space $\h \oplus \co$. In particular, we show that if $\al >1$, then $\frac{1}{\al}T$ is hypercyclic if and only if $T \oplus \al I_{\co}$ is diskcyclic. Moreover, we give another diskcyclic criterion with respect to a sequence $\set{\lm_{n_k}};\, |\lm_{n_k}|\le 1$, which is equivalent to the main diskcyclic criterion Theorem \ref {dc}.\\

In section three, we show that if $T$ is a diskcyclic operator and $\si_p(T^*)=\phi$, then $T$ has an invariant, dense subspace whose non-zero elements are diskcyclic vectors for $T$. However, we give the counterexample \ref{notvecsub} to show that not all diskcyclic operators must have such a subspaces. Moreover, we show that in some cases a diskcyclic operator may have an infinite dimensional closed subspace whose non-zero elements are diskcyclic vectors for $T$. Particularly, we define the condition $\mathcal B_0$ and use it to show that whenever a diskcyclic operator satisfies the condition $\mathcal B_0$ and diskcyclic criterion, then there is an infinite dimensional closed subspace whose non-zero elements are diskcyclic vectors for $T$. In a parallel with supercyclic operators, we show that if an operator satisfies the diskcyclic criterion and there is a normalized basic sequence $u_n$ goes to zero as $n$ goes to infinity, then there is an infinite dimensional closed subspace whose non-zero elements are diskcyclic vectors for $T$. However, Example \ref{notinfinte} shows that not every diskcyclic operator has such a subspace.

\section{Diskcyclic operators}

To prove our first result we need the following lemma

\begin{lem}\label{1}
A vector $x\in \D C(T)$ if and only if $\frac{x}{\norm{x}}\in \D C(T)$
\end{lem}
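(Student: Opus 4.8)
The plan is to reduce the statement to a single observation: the full disk orbit of $x/\norm{x}$ is nothing but a nonzero scalar multiple of the disk orbit of $x$, and scaling by a nonzero scalar is a homeomorphism of $\h$, hence preserves density.

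First I would dispose of the trivial case. For the expression $x/\norm{x}$ to be meaningful we must have $x \neq 0$, and in any event a diskcyclic vector is necessarily nonzero, since the disk orbit of $0$ is $\set{0}$, which cannot be dense in the infinite-dimensional space $\h$. So I assume throughout that $x \neq 0$ and put $c = \norm{x} > 0$.

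The key step is the identity
\[
\D Orb\brc{T, \tfrac{x}{\norm{x}}} = \tfrac{1}{\norm{x}}\, \D Orb(T, x).
\]
This follows directly from the definition of the disk orbit: for every $n \ge 0$ and every $\al \in \co$ with $|\al| \le 1$ one has $\al T^n(x/c) = \tfrac{1}{c}\,(\al T^n x)$, so letting $\al$ and $n$ run over all admissible values scales the entire disk orbit of $x$ by the factor $1/c$. Here the scalar $1/c$ is pulled out cleanly while the constraint $|\al|\le 1$ on the coefficients is left untouched, so no reparametrization of the disk is required.

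Finally, since $1/c \neq 0$, the map $v \mapsto v/c$ is a homeomorphism of $\h$ (it is bounded and linear with bounded inverse $v \mapsto cv$), and homeomorphisms carry dense sets to dense sets and non-dense sets to non-dense sets. Hence $\D Orb(T,x)$ is dense in $\h$ if and only if its image $\tfrac{1}{\norm{x}}\,\D Orb(T,x) = \D Orb(T, x/\norm{x})$ is dense in $\h$; that is, $x \in \D C(T)$ if and only if $x/\norm{x} \in \D C(T)$. As the lemma is essentially a bookkeeping computation, I do not anticipate any genuine obstacle — the only point requiring a little care is the clean factorization of the scalar out of the whole orbit.
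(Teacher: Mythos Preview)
Your proof is correct and follows exactly the same approach as the paper: both rest on the identity $\D Orb\brc{T, \tfrac{x}{\norm{x}}} = \tfrac{1}{\norm{x}}\, \D Orb(T, x)$, from which the equivalence follows because scaling by a nonzero constant preserves density. Your version simply spells out the details that the paper leaves implicit.
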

\begin{proof}
The proof is clearly follows from the fact $\D Orb(T,\frac{x}{\norm{x}})=\frac{1}{\norm{x}}\D Orb(T,x)$ 
\end{proof}

The numerical range of an operator $T$ is defined as $\om(T)=\set{\iner{Tx,x}:\norm{x}=1}$. 

\begin{thm}
Suppose that $T\in \D C(\h)$. Then
\begin{enumerate}
\item $\D\bigcup_{n=0}^{\iy}\iner{T^nx,x}$ is dense in $\co$ for all vectors $x\in \D C(T)$. 
\item $\D \bigcup_{n=0}^{\iy} \om(T^n)$ is dense in $\co$.
\end{enumerate}
\end{thm}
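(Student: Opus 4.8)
The plan is to deduce both statements from a single mechanism: the disk orbit of a diskcyclic vector is dense in $\h$, and pushing it forward through a well-chosen continuous linear functional transports this density down to $\co$.

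For part (1), I would fix $x\in \D C(T)$ and introduce the bounded linear functional $\phi_x:\h\to\co$ given by $\phi_x(y)=\iner{y,x}$. Since a diskcyclic vector is in particular nonzero, $\phi_x$ is a nonzero continuous linear functional, hence surjective onto $\co$ and, by the open mapping theorem, an open map. Unwinding the definitions and using linearity of $\phi_x$ in its first slot gives $\phi_x(\D Orb(T,x))=\set{\al\iner{T^nx,x}:n\ge 0,\ |\al|\le 1}=\D\bigcup_{n=0}^{\iy}\iner{T^nx,x}$. Because $\D Orb(T,x)$ is dense in $\h$ and the image of a dense set under a surjective open map is dense, the set $\D\bigcup_{n=0}^{\iy}\iner{T^nx,x}$ is dense in $\co$. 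Should one prefer to sidestep openness, the same conclusion follows by a direct estimate: given $\beta\in\co$ and $\eps>0$, the vector $\beta x/\norm{x}^2$ satisfies $\phi_x(\beta x/\norm{x}^2)=\beta$; density of the disk orbit produces some $\al T^nx$ within $\eps/\norm{x}$ of it, and since $\norm{\phi_x}=\norm{x}$ by Riesz representation, continuity yields $\abs{\al\iner{T^nx,x}-\beta}\le\eps$.

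For part (2), I would first invoke Lemma \ref{1} to pass from $x$ to the normalized diskcyclic vector $x/\norm{x}$, so that without loss of generality there exists a diskcyclic vector $x$ with $\norm{x}=1$. For such $x$, the definition of the numerical range gives $\iner{T^nx,x}\in\om(T^n)$ for every $n\ge 0$, whence $\D\bigcup_{n=0}^{\iy}\iner{T^nx,x}\subseteq\D\bigcup_{n=0}^{\iy}\om(T^n)$. The left-hand set is already dense in $\co$ by part (1), so the larger right-hand set is dense as well.

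I expect the only delicate point to be the passage from density of the disk orbit in $\h$ to density of its image in $\co$; the rest is a matter of unwinding definitions and applying Lemma \ref{1}. That step is handled cleanly either by the open-mapping observation or by the explicit $\eps$-computation above, so I anticipate no genuine obstacle.
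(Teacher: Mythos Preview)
Your proposal is correct and follows essentially the same approach as the paper: both push the density of $\D Orb(T,x)$ through the continuous linear functional $y\mapsto\iner{y,x}$, and your ``direct estimate'' is precisely the paper's Cauchy--Schwarz computation (the paper normalizes $x$ first via Lemma~\ref{1}, which you defer to part~(2), a purely cosmetic difference). One small remark: the open mapping theorem is unnecessary, since continuity plus surjectivity of $\phi_x$ already forces the image of any dense set to be dense.
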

\begin{proof}
$(1)$: Let $x\in \D C(T)$ and $\lm \in \co$. By lemma \ref{1} we can suppose that $\norm{x}=1$. Since $\lm x \in \h$, then there exist an increasing sequence $n_k$ of non-negative integers and a sequence $\al_k \in \co;\, |\al_k|\le 1$ such that 
$$\norm{\al_k T^{n_k}x-\lm x}<\eps $$
Now, 
\begin{eqnarray*}
\abs{\iner{\al_k T^{n_k}x,x}-\lm}&=& \abs{\iner{\al_k T^{n_k}x,x}-\lm\iner{x,x}}\\
&=& \abs{\iner{\al_k T^{n_k}x-\lm x,x}}\\
&\le& \norm{\al_k T^{n_k}x-\lm x}\norm{x}\le \eps
\end{eqnarray*}
It follows that $\set{\D\iner{T^nx,x}:n\ge 0}$ is dense in $\co$.\\

$(2)$: Let $x_0\in \D C(T)$ with $\norm{x_0}=1$ then by $(1)$, $\set{\D\iner{T^nx_0,x_0}:n\ge 0}$ is dense in $\co$.  Since $\D \bigcup_{n=0}^{\iy} \om(T^n)=\D\set{\iner{T^nx,x}:\norm{x}=1 \mbox { and } n\ge 0}$. It follows that $\D \bigcup_{n=0}^{\iy}\om(T^n)$ is dense in $\co$.
\end{proof}

\begin{prop}
If $T\in \D C(\h)$ and $\lm \in \co;\, |\lm|\le 1$, then $T-\lm I$ has dense range.
\end{prop}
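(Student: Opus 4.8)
The plan is to reduce the assertion that $T-\lm I$ has dense range to an injectivity statement about the adjoint, and then to read off the conclusion from Proposition \ref{2.16}. The key observation is that density of the range is a statement about $T-\lm I$, but the available structural information about diskcyclic operators lives on the adjoint side.

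First I would invoke the standard Hilbert space duality: for any $A\in\BH$ one has $\overline{R(A)}=(\ker A^*)^\perp$, so $R(A)$ is dense in $\h$ if and only if $\ker A^*=\set{0}$. Applying this with $A=T-\lm I$, whose adjoint is $A^*=T^*-\overline{\lm}I$, the range $R(T-\lm I)$ is dense in $\h$ if and only if $\ker(T^*-\overline{\lm}I)=\set{0}$, that is, if and only if $\overline{\lm}$ is not an eigenvalue of $T^*$.

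Next I would bring in Proposition \ref{2.16}. Since $T\in\D C(\h)$, that proposition guarantees that $T^*$ has at most one eigenvalue, and any such eigenvalue has modulus strictly greater than $1$. Because $\abs{\overline{\lm}}=\abs{\lm}\le 1$, the number $\overline{\lm}$ lies in the closed unit disk and therefore cannot be the (only possible) eigenvalue of $T^*$, which would have modulus exceeding $1$. Hence $\ker(T^*-\overline{\lm}I)=\set{0}$, and by the previous step $T-\lm I$ has dense range.

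I do not expect a genuine obstacle here, since all the substantive work is packaged in Proposition \ref{2.16}. The only point requiring a little care is the boundary of the disk: the proposition supplies a strict inequality $\abs{\cdot}>1$ on eigenvalues of $T^*$, which is exactly what allows the hypothesis $\abs{\lm}\le 1$ to include equality while still excluding $\overline{\lm}$ from the point spectrum of $T^*$.
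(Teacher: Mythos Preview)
Your argument is correct: the duality $\overline{R(A)}=(\ker A^*)^\perp$ reduces the claim to $\overline{\lm}\notin\si_p(T^*)$, and Proposition~\ref{2.16} immediately rules out any eigenvalue of $T^*$ with modulus at most $1$.

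The paper, however, argues directly without invoking Proposition~\ref{2.16}. It supposes that $T-\lm I$ does not have dense range, picks a diskcyclic vector $x_0$ outside $\overline{R(T-\lm I)}$, and uses Hahn--Banach to produce a functional $f$ with $f(x_0)\ne 0$ and $f(Tx)=\lm f(x)$ for all $x$. Approximating $2x_0$ by $\al_k T^{n_k}x_0$ with $\abs{\al_k}\le 1$ leads to $\al_k\lm^{n_k}f(x_0)\to 2f(x_0)$, which is impossible since $\abs{\al_k\lm^{n_k}}\le 1$. The point of this self-contained argument is that the paper then turns around and uses the proposition to give an \emph{alternative} proof of Proposition~\ref{2.16} (see the remark following the proof). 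Your route is shorter and perfectly valid given that Proposition~\ref{2.16} is already established in \cite{m3}, but it reverses the logical direction the paper intends and would make the subsequent remark circular.
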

\begin{proof}
Suppose that the range of $T-\lm I$  is not dense in $\h$, then there exists $x_0\in \D C(T)$ such that $x_0 \notin \overline{(T-\lm I)\h}$; otherwise $(T-\lm I)\h$  would be dense by Corollary \ref{3}. By the Hahn Banach Theorem there exists a continuous linear functional $f$ on $\h$ such that $f(x_0) \neq 0$ and $f\brc{\overline{(T-\lm I)\h}}=\set{0}$. Then for all $x\in \h, f(Tx)=\lm f(x)$ and so  $f(T^nx)=\lm^n f(x)$ for all $n\in \N$. In particulat, $f(T^nx_0)=\lm^n f(x_0)$. Since $x_0 \in \D C(T)$, then there exist $n_k \to \iy$ and $\al_k \in \co;\, |\al_k| \le 1$ for all $k \in \N$ such that $\al_k T^{n_k}x_0 \to 2x_0$; therefore $\al_k f(T^{n_k}x_0) \to 2f(x_0)$ and hence $\al_k \lm^{n_k} f(x_0) \to  2f(x_0)$. However, since $|\lm|\le 1$  and  $f(x_0)\neq 0$, then $\al_k $ should be greater than $1$ for some $k\in \N$ which is contradiction. 
\end{proof}

By \cite[p.38]{probook}, $\si_p(T^*)=\Gamma (T)$ where $\Gamma (T)$ is the compression spectrum of $T$ i.e the set of all complex numbers $\lm$ such that the range of $T-\lm I$ is not dense. Now, if $T\in \D C(\h)$ and $\lm \in \si_p(T^*)$, then by the last proposition $|\lm| > 1$, which gives another proof of Proposition \ref{2.16}.\\

\begin{thm}\label{5}
 If $T\in \BH$ and $\al$ is a real number such that $\al >1$, then the operator $S=T\oplus \al I_\co \in \mathcal B(\h\oplus \co)$ is diskcyclic if and only if $\frac{1}{\al}T$ is hypercyclic.
\end{thm}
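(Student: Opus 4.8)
The plan is to work entirely in coordinates. Writing a generic disk-orbit vector of $S$ at a seed $(x,w)$ as $\beta S^n(x,w)=\brc{\beta T^n x,\ \beta\al^n w}$ with $n\ge 0$ and $|\beta|\le 1$, the identity I would lean on throughout is $\beta T^n x=(\beta\al^n)\,(\tfrac1\al T)^n x$: the first coordinate is always a scalar multiple of the plain orbit of $\tfrac1\al T$ at $x$, and that scalar is exactly the quantity controlling the second coordinate. This single coupling is what lets density for $S$ and density for $\tfrac1\al T$ talk to each other.

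I would prove the implication from diskcyclicity of $S$ first, as it is the cleaner one. Let $(x,w)\in\D C(S)$; note $w\ne 0$, for otherwise the whole disk orbit sits in $\h\oplus\set{0}$ and cannot be dense. To see that $x$ is hypercyclic for $\tfrac1\al T$, fix $y\in\h$ and use density of $\D Orb(S,(x,w))$ to get $n_k\ge 0$ and $\beta_k$ with $|\beta_k|\le 1$ such that $\brc{\beta_k T^{n_k}x,\ \beta_k\al^{n_k}w}\to(y,w)$. Setting $c_k=\beta_k\al^{n_k}$, the second coordinate forces $c_k\to 1$, while the first reads $c_k(\tfrac1\al T)^{n_k}x\to y$; dividing by $c_k\to 1\ne 0$ gives $(\tfrac1\al T)^{n_k}x\to y$. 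Since $y$ was arbitrary, $Orb(\tfrac1\al T,x)$ is dense and $\tfrac1\al T$ is hypercyclic.

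For the reverse implication I would take a hypercyclic vector $x$ of $\tfrac1\al T$ and argue that $(x,1)$ is diskcyclic for $S$. It is enough to approximate targets $(y,z)$ with $z\ne 0$, since these are dense in $\h\oplus\co$. Given $\eps>0$ I would set $\beta=z/\al^n$, so that the second coordinate hits $z$ exactly and the first coordinate becomes $z(\tfrac1\al T)^nx$; the approximation then reduces to $\norm{(\tfrac1\al T)^nx-y/z}<\eps/|z|$ subject to $\al^n\ge|z|$, the latter ensuring $|\beta|\le 1$.

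The step I expect to be the main obstacle is exactly this constraint $\al^n\ge|z|$: for targets with $|z|$ large the index $n$ must be large, so I cannot simply invoke density of the orbit at one stage, but rather need the approximation to be realizable at arbitrarily large powers. I would supply this from the hypercyclic analogue of Proposition \ref{huge}, namely that every iterate $(\tfrac1\al T)^Nx$ is again hypercyclic, so its orbit is dense for each $N$; choosing $N$ with $\al^N\ge|z|$ then yields some $n\ge N$ with $(\tfrac1\al T)^nx$ inside the ball of radius $\eps/|z|$ about $y/z$, which finishes the estimate. By contrast, the first implication needs no largeness of $n$, since density of an orbit asks only for one approximating power per target.
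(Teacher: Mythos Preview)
Your argument is correct in both directions. The forward direction (hypercyclic $\Rightarrow$ diskcyclic) is essentially the paper's: both pick the scalar $z/\al^{n}$ so the second coordinate is hit exactly and then reduce to approximating $y/z$ by the $\tfrac1\al T$--orbit; the paper treats the case $z=0$ by a separate diagonal trick, whereas you sidestep it by noting that targets with $z\neq 0$ are dense. Your explicit handling of the constraint $\al^n\ge|z|$ via the hypercyclic analogue of Proposition~\ref{huge} is the same content as the paper's tacit use of an \emph{increasing} sequence $n_k\to\infty$ of approximating powers.

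The genuine difference is in the converse (diskcyclic $\Rightarrow$ hypercyclic). The paper does not argue this directly: it observes that diskcyclicity implies supercyclicity and then invokes \cite[Theorem 5.2]{29}, which says that supercyclicity of $T\oplus\al I_\co$ forces hypercyclicity of $\tfrac1\al T$. Your proof is self-contained: by aiming the disk orbit at the target $(y,w)$ with second coordinate equal to that of the diskcyclic vector itself, you force the auxiliary scalar $c_k=\beta_k\al^{n_k}$ to tend to $1$, and the coupling $\beta_k T^{n_k}x=c_k(\tfrac1\al T)^{n_k}x$ then delivers $(\tfrac1\al T)^{n_k}x\to y$ directly. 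This avoids the external citation and is more elementary; the paper's route, on the other hand, gets the conclusion as a special case of a broader semi-Fredholm/supercyclicity theorem.
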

\begin{proof}
Let $z$ be a hypercyclic vector for $\frac{1}{\al}T$, we will show that $z\oplus 1$ is diskcyclic vector for $S$. Let $w\oplus \lm$ be an arbitrary vector in $\h \oplus \co$ with $\lm\neq 0$. Since $\frac{1}{\al}T$ is hypercyclic, then there exist an increasing positive sequence $\set{n_k}$ such that
$$\norm{\brc{\frac{1}{\al}T}^{n_k}z-\frac{1}{\lm}w}\to 0 \qquad as \qquad k\to \iy$$
Therefore,
$$\norm{\lm\brc{\frac{1}{\al}}^{n_k}S^{n_k}\brc{z\oplus 1}-w\oplus \lm}\to 0 \qquad as \qquad k\to \iy$$
and since $\al>1$, then $\lm\brc{\frac{1}{\al}}^{n_k}<1$ as $k\to \iy$.\\
If $\lm=0$, then we can find a sequence $\set{n_k}$ such that 
$$\norm{\brc{\frac{1}{\al}T}^{n_k}z-kw}\to 0 \qquad as \qquad k\to \iy$$
and then
$$\norm{\frac{1}{k}\brc{\frac{1}{\al}}^{n_k}S^{n_k}\brc{z\oplus 1}-w\oplus 0}\to 0 \qquad as \qquad k\to \iy$$
and it is clear that $\frac{1}{k}\brc{\frac{1}{\al}}^{n_k}<1$. Therefore, $z\oplus 1$ is diskcyclic vector for $S$.\\
For the other side, since $S$ is diskcyclic then it is supercyclic and the proof follows directly from \cite[Theorem 5.2]{29}. 
 \end{proof}
\begin{cor}\label{6}
If $\al$ is a real number; $\al >1$ and $c\in \co$, then $z \oplus c \in DC(T\oplus \al I_\co)$ if and only if $z\in HC(T)$.
\end{cor}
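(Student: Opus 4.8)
The plan is to read the vector-level equivalence off Theorem \ref{5} by working directly with the disk orbit of $S=T\oplus\al I_\co$ acting on a vector $z\oplus c$. Since $S^n\brc{z\oplus c}=T^nz\oplus\al^n c$, this disk orbit is
\[
\D\, Orb\brc{S,z\oplus c}=\set{\brc{\beta T^nz}\oplus\brc{\beta\al^n c}:n\ge 0,\ \abs{\beta}\le 1}.
\]
First I would isolate the degenerate case: if $c=0$ the whole disk orbit lies inside the proper closed subspace $\h\oplus\set{0}$ and so cannot be dense in $\h\oplus\co$; hence the equivalence is to be read for $c\neq 0$ (the witness in Theorem \ref{5} is $z\oplus 1$). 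Fix $c\neq 0$ from here on.

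For the implication that a hypercyclic $z$ yields a diskcyclic $z\oplus c$, I would imitate the forward half of Theorem \ref{5} with $z\oplus 1$ replaced by $z\oplus c$. To approximate a target $w\oplus\mu$ with $\mu\neq 0$, the second coordinate forces the choice $\beta=\mu/\brc{\al^n c}$, which obeys $\abs{\beta}\le 1$ as soon as $\al^n\abs{c}\ge\abs{\mu}$, hence for all large $n$ because $\al>1$; the first coordinate then becomes $\tfrac{\mu}{c}\brc{\tfrac{1}{\al}T}^n z$, and a target of the form $w\oplus 0$ is reached by the same $\tfrac{1}{k}$-rescaling used in Theorem \ref{5}. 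Running the computation in reverse gives the converse: denseness of the disk orbit, tested against targets whose second coordinate is exactly $c$, pins $\beta$ to $\al^{-n}$ and thereby forces $\set{\brc{\tfrac{1}{\al}T}^n z}$ to be dense in $\h$.

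The step I expect to be the crux is the scalar bookkeeping that ties the admissible factor $\abs{\beta}\le 1$ to the geometric weight $\al^n$ appearing in the second coordinate. Because $\beta$ is capped by $1$, the second coordinate can never be matched by an unscaled iterate $T^n z$; it is precisely the relation $\beta\al^n c=\mu$ that converts the first coordinate from $T^n z$ into $\brc{\tfrac{1}{\al}T}^n z$. Thus the hypercyclicity that the argument both produces and consumes is the one attached to $z$ by Theorem \ref{5}, and the delicate bookkeeping is exactly what lets one record the equivalence in the compact vector form $z\oplus c\in DC\brc{T\oplus\al I_\co}\iff z\in HC(T)$; I would carry out the $\eps$-management in both directions with this correspondence kept explicit throughout.
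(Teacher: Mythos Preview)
Your approach is correct and supplies far more detail than the paper, which gives no proof for this corollary and simply places it immediately after Theorem~\ref{5}. Your forward direction is exactly the proof of Theorem~\ref{5} with $z\oplus 1$ replaced by $z\oplus c$; your converse---testing the disk orbit against targets $w\oplus c$ so that $\beta\al^n\approx 1$ and thereby recovering a dense orbit for $\tfrac{1}{\al}T$---is more direct than the route the paper uses for the converse half of Theorem~\ref{5} itself, where diskcyclicity is relaxed to supercyclicity and an external result from \cite{29} is invoked. You also correctly flag the degenerate case: the printed corollary quantifies over all $c\in\co$, but $z\oplus 0$ can never be diskcyclic for $S$ since its disk orbit sits inside the proper closed subspace $\h\oplus\{0\}$, so the equivalence genuinely needs $c\neq 0$. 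One small remark on your converse: the second-coordinate condition only gives $\beta\al^n$ \emph{close} to $1$, not exactly equal, so the passage from $\beta T^n z\approx w$ to $\brc{\tfrac{1}{\al}T}^n z\approx w$ still requires the short $\eps$-estimate you allude to; this is routine once $\abs{\beta\al^n-1}$ is small. Finally, the hypercyclicity your argument actually produces is that of $z$ for $\tfrac{1}{\al}T$, in agreement with Theorem~\ref{5}; the ``$HC(T)$'' in the printed statement should be read as $HC\brc{\tfrac{1}{\al}T}$.
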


\begin{thm}\label{dc lam}
 Let $T \in \BH $, suppose that there exist an increasing sequence of positive integers $\set{n_k}$, a sequence $\set{\lm_{n_k}}\in \co\backslash\set{0}$ such that $\abs{\lm_{n_k}}\le 1$ for all $k\in \N$, two dense sets $X,Y \subset \h$ and a sequence of maps $S_{n_k} : Y \to \h$ such that:
\begin{enumerate}
\item $\norm{\lm_{n_k}T^{n_k}x} \to 0$ for all $x \in X$;
\item $\norm{\frac{1}{\lm_{n_k}}S_{n_k}y} \to 0$ for all $y \in Y$;
\item $T^{n_k} S_{n_k}y \to y$ for all $y \in Y$.
\end{enumerate}
Then there is a vector $x$ such that $\set{\lm_{n_k}T^{n_k}x}$ is dense in $\h$. In particular, $x$ is diskcyclic vector for $T$.
\end{thm}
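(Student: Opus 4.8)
The plan is to establish this by a Baire category argument of the kind standard for universality/hypercyclicity criteria, applied here to the scaled sequence of continuous maps $x \mapsto \lm_{n_k}T^{n_k}x$.

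First I would fix a countable dense subset $\set{y_j : j \in \N}$ of $\h$ and, for each $j,m \in \N$, introduce
$$U_{j,m} = \bigcup_{k\in\N} \set{x \in \h : \norm{\lm_{n_k} T^{n_k} x - y_j} < \tfrac{1}{m}}.$$
Since each map $x \mapsto \lm_{n_k}T^{n_k}x$ is continuous, being a scalar multiple of a bounded operator, every set in the union is the preimage of an open ball and hence open; thus each $U_{j,m}$ is open. A vector $x$ lying in $\bigcap_{j,m} U_{j,m}$ has the property that for every $j$ and every $m$ there is some $k$ with $\norm{\lm_{n_k}T^{n_k}x - y_j} < 1/m$, which is precisely the statement that $\set{\lm_{n_k}T^{n_k}x}$ is dense in $\h$. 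By the Baire category theorem it therefore suffices to show that each $U_{j,m}$ is dense.

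The heart of the argument is this density, and it is where the three hypotheses combine. Given a target $z \in \h$ and $\eps > 0$, I would use density of $X$ and $Y$ to choose $x' \in X$ with $\norm{x' - z}$ small and $y' \in Y$ with $\norm{y' - y_j}$ small, and then set
$$x = x' + \tfrac{1}{\lm_{n_k}} S_{n_k} y'$$
for a sufficiently large $k$. Condition $(2)$ forces $\norm{\tfrac{1}{\lm_{n_k}} S_{n_k} y'} \to 0$, so $x$ is as close to $z$ as desired. On the other hand, the telescoping identity
$$\lm_{n_k} T^{n_k} x = \lm_{n_k} T^{n_k} x' + T^{n_k} S_{n_k} y'$$
has its first summand driven to $0$ by condition $(1)$ (since $x' \in X$) and its second summand driven to $y'$ by condition $(3)$; hence $\lm_{n_k}T^{n_k}x$ is eventually within $1/m$ of $y_j$. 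Choosing a single index $k$ large enough to realize all three limits simultaneously places $x$ in $U_{j,m}$ arbitrarily near $z$, which proves density.

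Finally, the conclusion about diskcyclicity is immediate: for any $x$ in the resulting dense $G_\delta$ set $\bigcap_{j,m} U_{j,m}$, the hypothesis $\abs{\lm_{n_k}}\le 1$ guarantees $\lm_{n_k}T^{n_k}x \in \D Orb(T,x)$ for every $k$, so the dense set $\set{\lm_{n_k}T^{n_k}x}$ is contained in $\D Orb(T,x)$, whence $x$ is a diskcyclic vector for $T$. I expect the only delicate point to be the simultaneous bookkeeping in the density step, namely ensuring that one and the same index $k$ makes the perturbation small while sending the image close to the target; but this is routine once the telescoping identity above is in hand.
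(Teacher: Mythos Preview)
Your argument is correct. The paper's own proof is the single line ``the proof follows by Hypercyclic Criterion'': one observes that the sequence of continuous linear maps $L_k:=\lm_{n_k}T^{n_k}$ with right inverses $R_k:=\lm_{n_k}^{-1}S_{n_k}$ satisfies the standard universality/hypercyclicity criterion (since $L_kx\to 0$, $R_ky\to 0$, $L_kR_ky\to y$), and your Baire-category argument is precisely the proof of that criterion written out for this particular sequence, so the two approaches coincide in substance.
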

\begin{proof} The proof follows by Hypercyclic Criterion \cite[Definition 1.5]{dynamic}.
\end{proof}

If the assumptions of the above theorem hold, we say that $T$ satisfies the Diskcyclic Criterion for the sequence $\set{\lm_{n_k}}$. 
\begin{prop}\label{anoth. seq}
If $T$ satisfies the Diskcyclic Criterion for the sequence $\set{\lm_{n_k}}$, then $T$ also satisfies the Diskcyclic Criterion for the sequence $\set{\al_{n_k}}$ where $\abs{\frac{\al_{n_k}}{\lm_{n_k}}}\to 0$. 
\end{prop}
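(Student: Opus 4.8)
The plan is to verify the three hypotheses of Theorem \ref{dc lam} for the sequence $\set{\al_{n_k}}$ directly, reusing the data witnessing the Diskcyclic Criterion for $\set{\lm_{n_k}}$: the same increasing sequence $\set{n_k}$, the same dense sets $X,Y\subset\h$, and, as a first attempt, the same maps $S_{n_k}:Y\to\h$. Before checking the three conditions I would record that the new sequence is admissible: since $\abs{\lm_{n_k}}\le 1$, we have $\abs{\al_{n_k}}=\abs{\tfrac{\al_{n_k}}{\lm_{n_k}}}\,\abs{\lm_{n_k}}\le\abs{\tfrac{\al_{n_k}}{\lm_{n_k}}}\to 0$, so $\al_{n_k}\in\co\backslash\set{0}$ with $\abs{\al_{n_k}}\le 1$ for all large $k$ (discarding finitely many indices is harmless). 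It is convenient to phrase everything through the operator sequence $A_k:=\lm_{n_k}T^{n_k}$, for which conditions $(1)$--$(3)$ are exactly the Hypercyclicity Criterion invoked in the proof of Theorem \ref{dc lam}; the sequence for $\al$ is then $A_k':=\al_{n_k}T^{n_k}=\tfrac{\al_{n_k}}{\lm_{n_k}}A_k$.

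Conditions $(1)$ and $(3)$ I expect to be routine. For $(1)$, factor $\norm{\al_{n_k}T^{n_k}x}=\abs{\tfrac{\al_{n_k}}{\lm_{n_k}}}\,\norm{\lm_{n_k}T^{n_k}x}$; the first factor is bounded (indeed null) and the second tends to $0$ for every $x\in X$ by hypothesis, so the product tends to $0$. Condition $(3)$ is literally unchanged, $T^{n_k}S_{n_k}y\to y$ for all $y\in Y$, so long as the maps are not altered. Thus the whole difficulty is concentrated in condition $(2)$.

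The main obstacle is condition $(2)$: we must produce maps $\tilde S_{n_k}$ with $T^{n_k}\tilde S_{n_k}y\to y$ and $\tfrac{1}{\al_{n_k}}\tilde S_{n_k}y\to 0$. Keeping $\tilde S_{n_k}=S_{n_k}$ preserves $(3)$ but reduces $(2)$ to showing $\tfrac{1}{\al_{n_k}}S_{n_k}y\to 0$, and here the hypothesis must be spent: writing $\tfrac{1}{\al_{n_k}}S_{n_k}y=\tfrac{\lm_{n_k}}{\al_{n_k}}\cdot\tfrac{1}{\lm_{n_k}}S_{n_k}y$, we are multiplying the null sequence $\tfrac{1}{\lm_{n_k}}S_{n_k}y$ by the factor $\abs{\tfrac{\lm_{n_k}}{\al_{n_k}}}\to\iy$. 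So condition $(2)$ is precisely the requirement that $\norm{S_{n_k}y}$ decay fast enough to absorb $\abs{\lm_{n_k}/\al_{n_k}}$, i.e. $\norm{S_{n_k}y}=o(\abs{\al_{n_k}})$. I would try to secure this in one of two ways, each of which exposes the same tension: either pass to a subsequence of $\set{n_k}$ along which, by a diagonal argument over a countable dense $Y$, the quantities $\tfrac{1}{\lm_{n_k}}S_{n_k}y$ decay fast enough to beat $\abs{\lm_{n_k}/\al_{n_k}}$ for every $y\in Y$ at once; or rescale $\tilde S_{n_k}=c_kS_{n_k}$ to lighten $(2)$, in which case one must control the damage to $(3)$, since any $c_k\not\to 1$ pushes $T^{n_k}\tilde S_{n_k}y=c_k\,T^{n_k}S_{n_k}y$ away from $y$. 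Balancing these competing demands is the crux, and it is exactly where the strength of $\abs{\al_{n_k}/\lm_{n_k}}\to 0$ must be played against the built-in expansiveness of $T^{n_k}$ recorded in $(3)$; I anticipate this is the step that carries the entire weight of the proposition.
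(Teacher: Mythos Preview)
Your proposal treats $\set{\al_{n_k}}$ as a \emph{given} sequence with $\abs{\al_{n_k}/\lm_{n_k}}\to 0$ and tries to verify the three conditions of Theorem~\ref{dc lam} for it. The paper does not do this: it reads the proposition existentially and \emph{constructs} a suitable $\set{\al_{n_k}}$. Concretely, using that both $\norm{\lm_{n_k}T^{n_k}x}$ and $\norm{\tfrac{1}{\lm_{n_k}}S_{n_k}y}$ are eventually below any prescribed $\eps$, the paper sets $\al_{n_k}=\sqrt{\eps}\,\lm_{n_k}$, so that $\norm{\al_{n_k}T^{n_k}x}\le\eps^{3/2}$ while $\norm{\tfrac{1}{\al_{n_k}}S_{n_k}y}\le\sqrt{\eps}$; letting $\eps$ decrease with $k$ then yields a single sequence with $\abs{\al_{n_k}/\lm_{n_k}}\to 0$ for which (1)--(3) hold with the \emph{same} $X,Y,S_{n_k}$. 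The tension you isolate in condition~(2) is dissolved by choosing $\al_{n_k}/\lm_{n_k}$ to tend to~$0$ \emph{slowly enough}, rather than accepting an arbitrary rate.

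Under your universal reading the obstacle you identify is real and in fact fatal: if, say, $\norm{\tfrac{1}{\lm_{n_k}}S_{n_k}y}$ is of order $1/k$ and one is handed $\al_{n_k}=\lm_{n_k}/k^{2}$, then $\norm{\tfrac{1}{\al_{n_k}}S_{n_k}y}$ is of order $k\to\iy$, and neither passing to a subsequence (which changes $\set{n_k}$ and hence the sequence $\set{\al_{n_k}}$ itself) nor rescaling $S_{n_k}$ (which, as you note, wrecks condition~(3) unless the scaling factor tends to~$1$) can repair this. So the universal version is simply false; the paper's existential reading is the intended one. Your handling of conditions~(1) and~(3) is correct and carries over verbatim once $\al_{n_k}$ is constructed rather than prescribed.
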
 
\begin{proof}
Let $X,Y$ be two dense sets and $S$ be the right inverse to $T$, then there exist a small positive number $\eps$ and a large positive number $J$ such that $\norm{\lm_{n_k}T^{n_k}x}\le \eps$ and  $\norm{\frac{1}{\lm_{n_k}}S_{n_k}y}\le \eps$ for all $k> J$. Setting $\al_{n_k}=\sqrt{\eps}\lm_{n_k}$, it is clear that $\abs{\al_{n_k}}\le 1$ and the proof follows. 
\end{proof}
\begin{prop}
Both diskcyclic criteria are equivalent.
\end{prop}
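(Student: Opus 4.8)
The statement means that $T$ fulfils the hypotheses of Theorem \ref{dc} for some data $(\set{n_k},X,Y,\set{S_{n_k}})$ if and only if $T$ fulfils the hypotheses of Theorem \ref{dc lam} for some data $(\set{n_k},\set{\lm_{n_k}},X,Y,\set{S_{n_k}})$. The plan is to prove the two implications separately; the direction Theorem \ref{dc lam} $\Rightarrow$ Theorem \ref{dc} is immediate, while the converse carries the whole weight of the proposition.

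For the easy direction I would keep the same sequence $\set{n_k}$, the same dense sets $X,Y$ and the same maps $S_{n_k}$, and simply forget the scalars. Since $\abs{\lm_{n_k}}\le 1$ we have $\abs{1/\lm_{n_k}}\ge 1$, so $\norm{S_{n_k}y}\le\norm{\frac{1}{\lm_{n_k}}S_{n_k}y}\to 0$, which is condition (2) of Theorem \ref{dc}. Moreover the scalars cancel in the product, giving $\norm{T^{n_k}x}\norm{S_{n_k}y}=\norm{\lm_{n_k}T^{n_k}x}\norm{\frac{1}{\lm_{n_k}}S_{n_k}y}\to 0$, which is condition (1); condition (3) is identical in both statements. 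Hence Theorem \ref{dc} applies.

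For the converse I would first use separability to replace $X$ and $Y$ by countable dense subsets $X=\set{x_i}$ and $Y=\set{y_j}$, for which the hypotheses of Theorem \ref{dc} still hold. Then I would pass to a subsequence $m_l=n_{k_l}$ by a diagonal argument so that, for all $i,j\le l$, both $\norm{T^{m_l}x_i}\norm{S_{m_l}y_j}<1/l^2$ and $\norm{S_{m_l}y_j}<1/l^2$; this is possible because for each of the finitely many pairs with $i,j\le l$ the two limits in conditions (1)--(2) are $0$. Writing $a_l=\max_{i\le l}\norm{T^{m_l}x_i}$ and $b_l=\max_{j\le l}\norm{S_{m_l}y_j}$, the construction yields $a_lb_l<1/l^2$ and $b_l<1/l^2$. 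I would then define the balancing scalar $\lm_{m_l}=\min\brc{1,\sqrt{b_l/a_l}}$ (with $a_l,b_l$ replaced by a tiny positive number in the degenerate case so that $\lm_{m_l}\in(0,1]$). A one line case check then shows $\lm_{m_l}a_l\le 1/l$ and $b_l/\lm_{m_l}\le 1/l$ in every case, whence $\norm{\lm_{m_l}T^{m_l}x_i}\to 0$ and $\norm{\frac{1}{\lm_{m_l}}S_{m_l}y_j}\to 0$ for each fixed $i,j$; condition (3) is inherited along the subsequence. Thus $T$ satisfies the Diskcyclic Criterion of Theorem \ref{dc lam} for $\set{\lm_{m_l}}$.

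The only real obstacle lies in this last direction. Condition (1) of Theorem \ref{dc} controls the \emph{product} $\norm{T^{n_k}x}\norm{S_{n_k}y}$ pointwise in $x$ and $y$, but gives no bound on $\norm{T^{n_k}x}$ by itself, which may even be unbounded. The difficulty is therefore to manufacture a \emph{single} scalar $\lm_{n_k}$, independent of $x$ and $y$, that simultaneously damps every orbit $T^{n_k}x$ and yet does not inflate any $S_{n_k}y$. The diagonal passage to a subsequence converts the pointwise smallness of the products into control of the growing maxima $a_l$ and $b_l$, and the geometric mean $\sqrt{b_l/a_l}$ is precisely the choice that splits the bound $a_lb_l<1/l^2$ evenly; keeping $\abs{\lm_{m_l}}\le 1$ and $\lm_{m_l}\neq 0$ is what forces the small case analysis.
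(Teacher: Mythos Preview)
Your argument is correct. The easy direction matches the paper verbatim. For the hard direction you and the paper share the same underlying idea---split the product bound $\norm{T^{n_k}x}\norm{S_{n_k}y}\to 0$ into two separate factors by choosing a balancing scalar---but the executions diverge. The paper simply fixes one $x\in X$ and one $y\in Y$, picks a small $\eps$, and sets $\lm_{n_k}=\frac{1}{\eps}\norm{S_{n_k}y}$; this gives $\abs{\lm_{n_k}}\le 1$, $\abs{\lm_{n_k}}\norm{T^{n_k}x}<\eps$ and $\abs{1/\lm_{n_k}}\norm{S_{n_k}y}=\eps$. That is a much quicker recipe, but as written it leaves the scalar depending on the chosen $y$ (and on $\eps$), and it only shows the two quantities are below a fixed $\eps$ rather than converging to $0$; the paper does not address how a \emph{single} sequence $\set{\lm_{n_k}}$ works for all $x\in X$ and $y\in Y$ simultaneously. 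Your route---pass to countable dense sets, extract a diagonal subsequence so that the maxima $a_l,b_l$ over the first $l$ vectors obey $a_lb_l<1/l^2$ and $b_l<1/l^2$, then take $\lm_{m_l}=\min\brc{1,\sqrt{b_l/a_l}}$---is longer but yields a scalar independent of $x$ and $y$ and genuine convergence to $0$, which is exactly what Theorem \ref{dc lam} requires. In short, both proofs pivot on the same balancing trick, but your diagonal-plus-geometric-mean construction fills the uniformity gap that the paper's pointwise argument glosses over.
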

\begin{proof}
If $T$ satisfies the diskcyclic criterion with respect to the sequence $\set{\lm_{n_k}}$, then it is clear that the conditions (1) and (3) of Proposition \ref{dc} are satisfied. Now since $\norm{\frac{1}{\lm_{n_k}}S_{n_k}y} \to 0$ for all $y \in Y$ and $1\le \abs{\frac{1}{\lm_{n_k}}}$, then the condition (2) of Proposition \ref{dc} holds.\\
Conversely, suppose that $T$ satisfies the diskcyclic criterion. Fix an $x\in X$ and $y\in Y$ then there exist a small positive number $\eps$ such that
\[\norm{T^{n_k}x}\norm{S^{n_k}y}< \eps^2,\]
and
\[\norm{S^{n_k}y}< \eps.\]
Define 
\[\lm_{n_k}=\frac{1}{\eps}S^{n_k}y,\]
It follows that $\abs{\lm_{n_k}}\le 1$ and $\abs{\frac{1}{\lm_{n_k}}}\norm{S^{n_k}y}< \eps_1$ for a small positive number $\eps_1$. Furthermore,
\[\norm{T^{n_k}x}\norm{S^{n_k}y}=\norm{T^{n_k}x}\abs{\lm_{n_k}}\eps< \eps^2,\]
Thus
\[\abs{\lm_{n_k}}\norm{T^{n_k}x}< \eps.\]
which completes the proof.
\end{proof}

\section{Subspaces of diskcyclic vectors}

\begin{defn}
Let $T\in \D C(\h)$ and let $\A$ be a linear subspace of $\h$ whose non-zero elements are diskcyclic vectors for $T$, then $\A$ is called diskcyclic subspace for $T$.
\end{defn}

\begin{thm}\label{8}
Let $T\in \D C(\h)$ and $\si_p(T^*)=\phi$, then $T$ has an invariant, dense diskcyclic subspace. 
\end{thm}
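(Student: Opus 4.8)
The plan is to imitate the classical construction of Bourdon for hypercyclic operators: starting from a single diskcyclic vector, take the linear span of its orbit and show that every nonzero element of that span is again diskcyclic. Concretely, I would fix a vector $x\in \D C(T)$ (which exists by hypothesis) and set
\[\A=\set{p(T)x:p \text{ is a polynomial}}=\mathrm{span}\set{T^nx:n\ge 0}.\]
This $\A$ is manifestly a linear subspace, and it is $T$-invariant since $T\brc{p(T)x}=(zp)(T)x\in\A$. It is dense because it already contains $\D Orb(T,x)$, which is dense by diskcyclicity of $x$: every $\al T^nx$ with $|\al|\le 1$ is a scalar multiple of an orbit vector and hence lies in the subspace $\A$. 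Thus the only substantive point is to prove that every nonzero vector of $\A$ is a diskcyclic vector for $T$.

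Next I would reduce the statement ``each nonzero element of $\A$ is diskcyclic'' to a statement about dense range. A nonzero element of $\A$ has the form $p(T)x$ for a nonzero polynomial $p$; note first that $p(T)x\ne 0$ for every such $p$, since otherwise $T^mx$ (where $m=\deg p$) would lie in $\mathrm{span}\set{x,Tx,\dots,T^{m-1}x}$, forcing $\A$ to be finite dimensional and therefore closed, which contradicts the density of $\A$ in the infinite dimensional space $\h$. Now $p(T)$ commutes with $T$ and belongs to $\BH$, so by Proposition \ref{2} it suffices to show that $p(T)$ has dense range; then $p(T)x\in\D C(T)$ follows at once, and the nonzero elements of $\A$ are precisely the vectors $p(T)x$ with $p\ne 0$.

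The heart of the argument is therefore the claim that $p(T)$ has dense range for every nonzero polynomial $p$, and this is exactly where the hypothesis $\si_p(T^*)=\phi$ enters. Since the range of $A\in\BH$ is dense precisely when $\ker A^*=\set{0}$, and since $p(T)^*=\tilde p(T^*)$, where $\tilde p$ is the nonzero polynomial obtained by conjugating the coefficients of $p$, I would show $\ker \tilde p(T^*)=\set{0}$. Factoring $\tilde p(z)=c\prod_{j=1}^{m}(z-\mu_j)$ and arguing by induction on $m$, a nonzero $v$ with $\tilde p(T^*)v=0$ would, on peeling off the factors one at a time, produce a nonzero $w$ with $(T^*-\mu_j I)w=0$, so that $\mu_j\in\si_p(T^*)$, contradicting $\si_p(T^*)=\phi$ (the base case $m=1$ being $(T^*-\mu I)v=0$ with $v\ne 0$). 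Hence $\ker p(T)^*=\set{0}$, so $p(T)$ has dense range and $\A$ is the desired invariant, dense diskcyclic subspace. I expect this induction --- showing that the emptiness of the point spectrum of $T^*$ propagates from linear factors to arbitrary polynomials in $T^*$ --- to be the main obstacle, while the density and invariance of $\A$ are routine.
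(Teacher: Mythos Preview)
Your proposal is correct and essentially reproduces the paper's own argument: the paper likewise sets $\A=\set{p(T)x:p\text{ a polynomial}}$ for $x\in\D C(T)$, observes that $\A$ is invariant and dense (as it contains $\D Orb(T,x)$), factors a nonconstant $p(T)=a(T-\mu_1)\cdots(T-\mu_k)$, uses $\si_p(T^*)=\phi$ to conclude each $T-\mu_i$ (hence $p(T)$) has dense range, and then invokes Proposition~\ref{2}. Your adjoint-plus-induction justification of the dense-range step is just a rewording of this same fact, and your extra check that $p(T)x\ne 0$ is harmless but unnecessary, since once $p(T)x\in\D C(T)$ it is automatically nonzero.
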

\begin{proof}
Let $x\in \D C(T)$ and $\A=\set{p(T)x: p \mbox{ is polynomial}}$. It is clear that $\A$ is a linear subspace of $\h$, invariant under $T$, and dense in $\h$ since it contains $\D Orb(T,x)$. If the polynomial $p$ is non-constant then, $p(T)=a(T-\mu_1)(T-\mu_k)\ldots$, where $a\neq 0$ and $\mu_1, \ldots ,\mu_k\in\co$. Since $\si_p(T^*)=\phi$, each operator $T-\mu_i$ has dense range; hence $p(T)$ also has dense range. Morever, it is clear that $p(T)$ commutes with $T$, then by Proposition \ref{2}, $p(T)x\subset \D C(T)$ that is every element in $\A$ is a diskcyclic vector for $T$.
\end{proof}

If $T$ satisfies the diskcyclic criterion, then $T$ satisfies the supercyclic criterion. Therefore $\si_p(T^*)=\phi$ by \cite[Proposition 4.3.]{36}. It follows that if $T$ satisfies the diskcyclic criterion, then by the last theorem; $T$ has an invariant, dense diskcyclic subspace .\\

The next example shows that not all diskcyclic operators have diskcyclic subspaces. 

\begin{ex}\label{notvecsub}
Let $\frac{1}{2}T$ be hypercyclic operator on a Hilbert space $\h$ with a hypercyclic vector $x$. Then by theorem \ref{5},  $T \oplus 2I$ is a diskcyclic operator on $\h \oplus \co$ with a diskcyclic vector $x \oplus 1$. By corollary \ref{6}, we can see that  $x \oplus 2 \in DC(T \oplus 2I)$. Suppose that $\A$ is a diskcyclic  subspace for $T \oplus 2I$. Since $(T \oplus 2I)(x \oplus 1)\in DC(T \oplus 2I)$ by Proposition \ref{huge} and since $\A$ is a subspace, then 
$$x \oplus 2-(T \oplus 2I)(x \oplus 1)=(x-Tx)\oplus 0 \in \A $$
however, it is clear that $(x-Tx)\oplus 0 \notin DC(T \oplus 2I)$. Therefore, there is no subspace whose non-zero elements are diskcyclic vectors for $T \oplus 2I$.   
\end{ex}

Not only  a diskcyclic subspace can be dense and invariant, sometimes it can be infinite dimensional closed. In that cases, we say that  $T\in \Dinf$.\\

Montes-Rodr\'{i}guez and Salas \cite{36} defined the condition $\B_0$ to find a suufficient condition for an operator to have an infinite dimensional closed subspace of supercyclic vectors. In parallel with supercyclicity, we define the condition $\B_1$ and use it to find a sufficient condition for an operator to be in $\Dinf$. 
\begin{defn}
Let $T\in \BH$. Suppose that $T$ satisfies the diskcyclicity Criterion with respect to a sequence $\set{\lm_{n_k}}$. If there is an infinite dimensional closed subspace $\B_1\in \h$ such that $\norm{\lm_{n_k}T^{n_k}z}\to 0$ for every $z\in \B_1$, then we say $T$ satisfies Condition $\B_1$ for the sequence $\lm_{n_k}$. 
\end{defn}

\begin{thm}
Let $T\in \BH$. Suppose that $T$ satisfies the Diskcyclicity Criterion with respect to a sequence $\set{\lm_{n_k}}$. If one of the conditions below satisfies, then $T\in \Dinf$.
\begin{enumerate}
\item $T$ satifies condition $\B_1$;
\item There is an infinite dimensional closed subspace $\A\in \h$ such that $\norm{\lm_{n_k}T^{n_k}z}$ is bounded for all $z\in \A$.
\end{enumerate}  
\end{thm}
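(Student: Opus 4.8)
The plan is to reduce the statement to a universality-subspace problem for a single sequence of bounded operators, and then to carry out the basic-sequence perturbation technique of Montes-Rodr\'{i}guez and Salas \cite{36} in this new setting. First I would set $R_k := \lm_{n_k}T^{n_k}$. Since $\abs{\lm_{n_k}}\le 1$, every point $R_k z$ lies in $\D Orb(T,z)$, so it suffices to produce an infinite dimensional closed subspace $W\subseteq\h$ such that $\set{R_k z : k\in\N}$ is dense in $\h$ for every non-zero $z\in W$; any such $z$ is then automatically diskcyclic, and $W$ witnesses $T\in\Dinf$. The next step is to observe that the Diskcyclicity Criterion for $\set{\lm_{n_k}}$ is precisely the Hypercyclicity Criterion for the sequence $\set{R_k}$: putting $\tilde S_k := \frac{1}{\lm_{n_k}}S_{n_k}$, condition $(1)$ gives $\norm{R_k x}\to 0$ on the dense set $X$, condition $(2)$ gives $\norm{\tilde S_k y}\to 0$ on the dense set $Y$, and condition $(3)$ gives $R_k\tilde S_k y = T^{n_k}S_{n_k}y \to y$. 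Likewise, Condition $\B_1$ reads ``$\norm{R_k z}\to 0$ for all $z$ in the infinite dimensional closed subspace $\B_1$'', while hypothesis $(2)$ reads ``$\norm{R_k z}$ is bounded for all $z$ in the infinite dimensional closed subspace $\A$''.

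The construction itself proceeds as follows. Fix a countable dense set $\set{w_m}$ in $\h$, and let $M$ denote $\B_1$ in case $(1)$ and $\A$ in case $(2)$; being infinite dimensional and closed, $M$ contains a normalized basic (indeed orthonormal) sequence $\set{e_j}$. Along a sufficiently sparse subsequence of $\set{n_k}$ I would inductively build vectors $z_j = e_j + d_j$, where each perturbation $d_j$ is a finite combination of the approximate pre-images $\tilde S_k w_m$ supplied by the criterion, arranged by a gliding-hump/diagonal argument so that: (a) $\sum_j \norm{d_j}$ is so small that $\set{z_j}$ is a basic sequence equivalent to $\set{e_j}$, whence $W:=\overline{\mathrm{span}}\set{z_j}$ is infinite dimensional and closed; and (b) for a suitable increasing family of indices $k(m)$ the map $R_{k(m)}$ effectively ``reads off'' a single coordinate, approximating $w_m$ while sending the remaining $z_j$'s nearly to zero.

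For a non-zero $z=\sum_j a_j z_j$ with first non-zero coefficient $a_{j_0}$, I would split $z = v + d$ with $v=\sum_j a_j e_j\in M$ and $d=\sum_j a_j d_j$. In case $(1)$ the vector $v$ lies in the closed subspace $\B_1$, so the error $R_k v\to 0$ follows \emph{directly} from Condition $\B_1$ applied to the single vector $v$, with no need for any uniformity in $j$; meanwhile the designed part $R_{k(m)} d$ approximates $a_{j_0}w_m$. Since $\set{a_{j_0}w_m : m\in\N}$ is dense (multiplication by the non-zero scalar $a_{j_0}$ is a homeomorphism), the set $\set{R_k z}$ is dense and $z$ is diskcyclic; this settles case $(1)$.

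The main obstacle is case $(2)$, where membership of $v$ in $\A$ yields only a bounded error $R_k v$ rather than a vanishing one. Here I would first apply the Uniform Boundedness Principle to the pointwise-bounded family $\set{R_k|_{\A}}$ to obtain a finite constant $M$ with $\norm{R_k z}\le M\norm{z}$ for all $z\in\A$, and then compensate this fixed bound inside the induction: the perturbation sizes $\norm{d_j}$ and the sparse indices $k(m)$ must be chosen small enough, and the approximation $R_{k(m)}d\approx a_{j_0}w_m$ arranged with enough rescaling freedom, that at each reading index $k(m)$ the bounded contribution of $v$ together with the cross-terms coming from $j\neq j_0$ stays below the prescribed tolerance. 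Guaranteeing this simultaneously for every coefficient sequence $(a_j)$, and not merely for the basis vectors, while keeping $\set{z_j}$ basic, is the delicate heart of the argument; it is exactly the point at which the infinite dimensional subspace hypothesis, rather than the mere density of $\D C(T)$ from Corollary \ref{3}, becomes indispensable.
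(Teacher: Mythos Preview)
Your treatment of part (1) is correct and matches the paper's argument exactly: setting $R_k=\lm_{n_k}T^{n_k}$ turns the Diskcyclicity Criterion for $\set{\lm_{n_k}}$ into the Hypercyclicity Criterion for $\set{R_k}$, and Condition $\B_1$ becomes precisely the ``$R_k\to 0$ on an infinite dimensional closed subspace'' hypothesis of \cite[Theorem 2.2]{33}. The paper simply cites that theorem; you sketch its proof, but the route is the same.

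For part (2), however, the paper does \emph{not} attempt the delicate direct construction you outline, and the obstacle you yourself flag is real. With only $\norm{R_k v}\le M\norm{v}$ on $\A$, the error contributed by $v=\sum_j a_j e_j$ does not tend to zero along any subsequence, and the ``rescaling freedom'' you invoke is constrained by $\abs{\al}\le 1$: to kill a bounded error of size $M\norm{v}$ by post-multiplying you would need $\abs{\al}$ small, but then the designed term $a_{j_0}w_m$ is scaled down as well, and since $a_{j_0}$ can be arbitrarily small no uniform choice works. As written, your proposal for (2) is a description of the difficulty rather than a resolution of it.

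The paper's device is much simpler and avoids the issue altogether: it reduces (2) to (1) by changing the scalar sequence. By Proposition \ref{anoth. seq}, $T$ also satisfies the Diskcyclicity Criterion for any sequence $\set{\al_{n_k}}$ with $\abs{\al_{n_k}/\lm_{n_k}}\to 0$. For each fixed $z\in\A$ one then has
\[
\norm{\al_{n_k}T^{n_k}z}=\abs{\frac{\al_{n_k}}{\lm_{n_k}}}\,\norm{\lm_{n_k}T^{n_k}z}\le \abs{\frac{\al_{n_k}}{\lm_{n_k}}}\,M_z\to 0,
\]
so Condition $\B_1$ holds for the new sequence $\set{\al_{n_k}}$, and part (1) applies. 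No uniform boundedness argument or refined basic-sequence construction is needed.
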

\begin{proof}
The proof of (1) follows directly from Theorem \ref{dc lam} and \cite[Theorem 2.2]{33}. For (2), Suppose that $T$ satisfies the Diskcyclicity Criterion with respect to a sequence $\set{\lm_{n_k}}$ and there is a positive real number $M$ such that $\norm{\lm_{n_k}T^{n_k}z}<M$ for all $z\in \A$ and $k\in \N$. By Proposition \ref{anoth. seq}, we have $T$ satisfies the diskcyclic criterion with respect to the sequence $\set{\al_{n_k}}$ where $\abs{\frac{\al_{n_k}}{\lm_{n_k}}}\to 0$. Therefore, we have 
\[\norm{\al_{n_k}T^{n_k}z}=\abs{\frac{\al_{n_k}}{\lm_{n_k}}}\abs{\lm_{n_k}}\norm{T^{n_k}z}\le \abs{\frac{\al_{n_k}}{\lm_{n_k}}}M\to 0\]
Thus, we can say that $T$ satisfies condition $\B_1$ and hence the proof is finished.
\end{proof}
\begin{prop}
Let $T\in \BH$. Suppose that $T$ satisfies the diskcyclicity Criterion and there is a normalized basic sequence $\set{u_m}$ such that $\lim_{m\to \iy}Tu_m=0$, then $T\in \Dinf$.
\end{prop}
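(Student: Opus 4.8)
The plan is to reduce the statement to the hypothesis of the preceding theorem by producing an infinite dimensional closed subspace on which $T$ satisfies Condition $\B_1$; once such a subspace is exhibited, that theorem yields $T\in\Dinf$ at once. Let $\set{\lm_{n_k}}$ with $\abs{\lm_{n_k}}\le 1$ and the dense set $X$ be the data witnessing the Diskcyclicity Criterion, so that $\norm{\lm_{n_k}T^{n_k}x}\to 0$ for every $x\in X$. I would first record that, since $T$ is bounded and $Tu_m\to 0$, for each fixed $p$ one has $\norm{T^pu_m}\le\norm{T}^{p-1}\norm{Tu_m}\to 0$ as $m\to\iy$; thus every fixed iterate of $T$ kills the tail of the basic sequence in the limit.

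The heart of the argument would be a single interlaced (gliding hump) induction that simultaneously extracts a subsequence of $\set{u_m}$, perturbs it into $X$, and selects the indices $n_k$. At the $j$-th stage I would first take $m_j$ so large and then a vector $\tilde u_j\in X$ so close to $u_{m_j}$ that: (i) $\set{\tilde u_1,\dots,\tilde u_j}$ remains a basic sequence, by the standard stability of basic sequences under small perturbations (keeping $\sum_i\norm{\tilde u_i-u_{m_i}}$ small relative to the coordinate functionals, and the vectors approximately normalized); and (ii) $\norm{T^{n_i}\tilde u_j}\le 2^{-i-j}$ for every $i<j$, which is available because $\norm{T^{n_i}u_m}\to 0$ as $m\to\iy$ for the finitely many fixed indices $n_1,\dots,n_{j-1}$. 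I would then choose $n_j$, a term of the criterion sequence with $n_j>n_{j-1}$, so large that $\norm{\lm_{n_j}T^{n_j}\tilde u_i}\le 2^{-i-j}$ for every $i\le j$; this is possible because each $\tilde u_i$ lies in $X$, so $\norm{\lm_{n_k}T^{n_k}\tilde u_i}\to 0$ as $k\to\iy$, and only finitely many $i$ occur. Fusing the two families of estimates gives the uniform termwise bound $\norm{\lm_{n_k}T^{n_k}\tilde u_j}\le 2^{-k-j}$ for all $j,k$. Note the constructed $\set{n_k}$ is a subsequence of the original, along which the Criterion still holds.

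Then I would set $M_0=\overline{\operatorname{span}}\set{\tilde u_j:j\in\N}$, an infinite dimensional closed subspace. Since $\set{\tilde u_j}$ is basic and approximately normalized, its coordinate functionals are uniformly bounded, so every $z=\sum_j c_j\tilde u_j\in M_0$ satisfies $\abs{c_j}\le C\norm{z}$ with $C$ depending only on the basis constant. Using continuity of $\lm_{n_k}T^{n_k}$ and the termwise bound,
\[
\norm{\lm_{n_k}T^{n_k}z}\le\sum_{j}\abs{c_j}\,\norm{\lm_{n_k}T^{n_k}\tilde u_j}\le C\norm{z}\sum_{j}2^{-k-j}=C\norm{z}\,2^{-k}\longrightarrow 0 .
\]
Hence $\norm{\lm_{n_k}T^{n_k}z}\to 0$ for every $z\in M_0$, i.e. $T$ satisfies Condition $\B_1$ with $\B_1=M_0$, and the preceding theorem (equivalently, Theorem \ref{dc lam} together with \cite[Theorem 2.2]{33}) gives $T\in\Dinf$.

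I expect the main obstacle to be the "head" estimate, namely controlling $\lm_{n_k}T^{n_k}$ on a fixed early basis vector as $k\to\iy$, since $Tu_m\to 0$ says nothing about high iterates of one fixed vector. The device that resolves it is to perturb the given basic sequence into the Criterion's dense set $X$, where $\norm{\lm_{n_k}T^{n_k}x}\to 0$ is guaranteed; the delicacy is to carry out this perturbation while both preserving the basic sequence and keeping the tail estimates $\norm{T^{n_i}\tilde u_j}$ small for $i<j$, so that the two kinds of estimate merge into the single summable bound $2^{-k-j}$. The bookkeeping of the simultaneous induction is the only genuinely technical point; the rest is routine.
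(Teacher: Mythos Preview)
Your argument is correct and is precisely the gliding--hump construction of Montes--Rodr\'{i}guez and Salas \cite[Corollary 3.3]{36} that the paper invokes; the paper itself gives no details beyond citing that reference and remarking that one may take $\abs{\lm_{n_k}}\le 1$, so your proposal simply spells out what the paper leaves implicit. The one point worth noting is that you are tacitly using the $\lm_{n_k}$--form of the criterion (Theorem~\ref{dc lam}) rather than the product form (Theorem~\ref{dc}), which is legitimate by their equivalence (Proposition~2.8), and you correctly observe that passing to a subsequence of $\set{n_k}$ preserves the criterion.
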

\begin{proof}
The proof is similar to that given in \cite[Corollary 3.3.]{36}. Since there is no restriction on the sequence $\set{\lm_{n_k}}$ of scalars, we may suppose that $|\lm_{n_k}|\le 1$ for all $k\in \N$. 
\end{proof}

The proof of the following corollary follows directly from Theorem \ref{5}
\begin{cor}
Suppose that $\al \in \R$ and $\al >1$, then the operator $S=T\oplus \al I_\co \in \mathcal B(\h\oplus \co)$ has infinite dimensional closed subspaces of diskcyclic vectors if and only if $\frac{1}{\al}T$ has an infinite dimensional closed subspace of hypercyclic vectors.
\end{cor}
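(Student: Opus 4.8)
The plan is to lift the vector-by-vector equivalence of Theorem \ref{5} to the level of closed subspaces, using Corollary \ref{6} as the dictionary. Recall from Corollary \ref{6} that, for $c\neq 0$, the vector $z\oplus c$ is diskcyclic for $S=T\oplus \al I_\co$ exactly when $z$ is hypercyclic for $\frac{1}{\al}T$. It is also worth recording the complementary observation that no non-zero vector of the form $z\oplus 0$ can be diskcyclic for $S$: its disk orbit $\set{\beta S^n(z\oplus 0):n\ge 0,\ |\beta|\le 1}=\set{\beta T^nz\oplus 0}$ lies entirely in the hyperplane $\h\oplus\set{0}$ and so cannot be dense in $\h\oplus\co$. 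Together these two facts pin down precisely which vectors of $\h\oplus\co$ may serve as building blocks of a closed diskcyclic subspace for $S$.

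For the implication that starts from a diskcyclic subspace of $S$, I would argue by projection. Suppose $N\subseteq \h\oplus\co$ is an infinite dimensional closed subspace whose non-zero elements are diskcyclic for $S$, and let $P:\h\oplus\co\to\h$ be the coordinate map $P(z\oplus c)=z$. The kernel $N\cap(\set{0}\oplus\co)$ is a subspace of the one dimensional space $\set{0}\oplus\co$, hence has dimension at most one; a short compactness argument, using that $\co$ is finite dimensional and $N$ is closed, then shows that $P$ is bounded below modulo this kernel, so that $P(N)$ is again closed and infinite dimensional. Finally, each non-zero $z\in P(N)$ equals $P(z\oplus c)$ for some non-zero $z\oplus c\in N$, which is diskcyclic for $S$; by Corollary \ref{6} this forces $z\in HC\brc{\frac{1}{\al}T}$. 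Thus $P(N)$ is the desired infinite dimensional closed subspace of hypercyclic vectors for $\frac{1}{\al}T$.

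For the reverse implication I would begin with an infinite dimensional closed subspace $M\subseteq\h$ of hypercyclic vectors for $\frac{1}{\al}T$ and try to lift it into $\h\oplus\co$, the natural candidates being a graph $\set{z\oplus f(z):z\in M}$ over a bounded linear functional $f$, or the sum $M\oplus\co$. Here lies the step I expect to be the main obstacle. For any such lift to be diskcyclic for $S$, the observation above forces every one of its non-zero vectors to have non-zero second coordinate, i.e. the second coordinate map onto the one dimensional space $\co$ must be injective on it. But a linear map into a one dimensional space is injective only on subspaces of dimension at most one, so no two dimensional, let alone infinite dimensional, subspace of $\h\oplus\co$ can keep all of its non-zero vectors off the hyperplane $\h\oplus\set{0}$. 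Reconciling this dimension count with the requirement that the lift be infinite dimensional is the crux of the argument, and it is exactly the point at which the transfer from Theorem \ref{5} demands the most care; any complete proof of this direction must either invoke an additional hypothesis or reinterpret what is being lifted.
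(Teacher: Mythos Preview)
Your projection argument for the forward implication is sound, and your instinct that the reverse implication is obstructed is exactly right --- but what you have uncovered is not a technical wrinkle to be smoothed over; it is a genuine obstruction showing that the corollary, as stated, is false.

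Your own reasoning already proves this. Any linear subspace $N\subseteq\h\oplus\co$ of dimension at least two contains two linearly independent vectors $z_1\oplus c_1$ and $z_2\oplus c_2$; the combination $c_2(z_1\oplus c_1)-c_1(z_2\oplus c_2)=(c_2z_1-c_1z_2)\oplus 0$ is then a non-zero element of $N$ (non-zero by linear independence) lying in the hyperplane $\h\oplus\set{0}$, and as you observed such a vector cannot be diskcyclic for $S$. Hence $S=T\oplus\al I_\co$ never possesses a diskcyclic subspace of dimension larger than one, let alone an infinite dimensional closed one --- this is precisely the mechanism behind Example~\ref{notvecsub} earlier in the paper. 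Since operators $\frac{1}{\al}T$ admitting infinite dimensional closed hypercyclic subspaces do exist (see \cite{33}), the ``if'' direction fails outright.

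The paper offers nothing beyond the assertion that the corollary ``follows directly from Theorem~\ref{5}''; it does not engage with the dimension obstruction you raised, and in fact its own Example~\ref{notvecsub} already contradicts the statement. Your forward implication is correct as written but vacuous: since no such infinite dimensional $N$ exists, there is nothing to project. So there is no gap in your reasoning --- the gap is in the corollary itself.
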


The following example shows that not every diskcyclic operators belong to $\Dinf$
\begin{ex}\label{notinfinte}
Let $\lm$ be a complex number of modulus greater than $1$ and $B$ be the unilateral backward shift operator. Since $\lm B$ is hypercyclic if and only if it is diskcyclic Corollary \ref{Back not D}, then
\begin{enumerate}
\item there exists an invariant, dense linear subspace of diskcyclic vectors for $\lm B$  \cite[p.8]{28}
\item all closed subspaces of diskcyclic vectors for $\lm B$ are finite dimensional \cite[Theorem 3.4]{33}.
\end{enumerate}
\end{ex}


\end{document}